\begin{document}
	\setlength{\parindent}{0pt}

	\author{Malte Lackmann}
	\address{Mathematisches Institut\\
		Universit\"at Bonn}
	\email{lackmann@math.uni-bonn.de}
	
	\title{The octonionic projective plane}
	\keywords{}
	\maketitle
	
	\begin{abstract} This small note, without claim of originality, constructs the projective plane over the octonionic numbers and recalls how this can be used to rule out the existence of higher-dimensional real division algebras, using Adams' solution of the Hopf invariant $1$ problem.
	\end{abstract}

	
	
	\setlength{\parindent}{0pt}
	\newcommand{\absatz}{\\[0.5cm]}
	\newcommand{\kleinerabsatz}{\\[0.25cm]}
	\newcommand{\bw}[1]{\textcolor{red}{#1}}

	\newcommand{\oc}{$\mathbb{O}$}
	\newcommand{\R}{$\mathbb{R}$}
	\newcommand{\C}{$\mathbb{C}$}
	\newcommand{\ha}{$\mathbb{H}$}
	\newcommand{\hide}[1]{}

	\newtheorem{thm}{Theorem}[section]
	
	\newtheorem{lemma}[thm]{Lemma}
	
	\newtheorem{prop}[thm]{Proposition}
	
	\newtheorem*{notation}{Notation}
	
	\newtheorem*{definition}{Definition}
	
	\newtheorem{corollary}[thm]{Corollary}
	
	\theoremstyle{remark}
	\newtheorem{remark}{Remark}
	\newtheorem*{question}{Open Question}
	
	\newcommand{\pr}{{\bf Proof: }}
	
	\date{}
	\maketitle
	
	\section{Introduction}
	
	As mathematicians found out in the last century, there are only four normed division algebras\footnote{A division algebra over \R\, is a finite-dimensional unital \R-algebra without zero divisors, not necessarily commutative or associative. A normed algebra over \R\, is an \R-algebra $\mathbb{A}$ together with a map $\left\Vert\cdot\right\Vert\colon \mathbb{A}\rightarrow \mathbb{R}$ coinciding with the usual absolute value on $\mathbb{R}\cdot 1\cong \mathbb{R}$, satisfying the triangular inequality, positive definiteness and the rule $\left\Vert xy\right\Vert=\left\Vert x\right\Vert\left\Vert y\right\Vert$.} over $\mathbb{R}$: the real numbers themselves, the complex numbers, the quaternions and the octonions. Whereas the real and complex numbers are very well-known and most of their properties carry over to the quaternions (apart from the fact that these are not commutative), the octonions are very different and harder to handle since they are not even associative. However, they can be used for several interesting topological constructions, often paralleling constructions known for \R, \C\, or \ha. In this article, we will construct $\mathbb{O}\textup{P}^2$, a space having very similar properties to the well-known two-dimensional projective spaces over \R, \C\,and \ha. 
	
	\subsection*{Outline} The second section will recall a construction of the octonions and discuss their basic algebraic properties.
	We will then move on to actually construct the octonionic projective plane $\mathbb{O}\textup{P}^2$. In the fourth section, we will discuss properties of $\mathbb{O}\textup{P}^2$ and applications in algebraic topology. In particular, we will use it to construct a map $\mbox{S}^{15}\longrightarrow \mbox{S}^8$ of Hopf invariant $1$. Moreover, it will be explained why there cannot be projective spaces over the octonions in dimensions higher than $2$. 
	
	\subsection*{Acknowledgements} This note was originally written by the author as a term paper for the Algebraic Topology 1 course of Peter Teichner at the University of Bonn. I thank him for suggesting this interesting topic. At the time of publication, I am working as a PhD student at Bonn, supported by the ERC Advanced Grant "KL2MG-interactions" (no. 662400) of Wolfgang L\"uck. I thank Jasper Knyphausen, Stefan Friedl and the anonymous referee for helpful comments on this paper.
	
	\section{Construction of \oc}
	
	\label{construction}
	
	In this section, we will explain how the octonions can be constructed. Of course, there are many ways to define them -- the simplest way would be to choose a basis of \oc\, as a vector space and then specify the products of each pair of basis elements. We will try to explain a little better the ``reason'' for the existence of octonions by giving a construction which leads from the quaternions to the octonions, but which can also be used to construct \C\, out of \R\, and \ha\, out of \C, the so-called \emph{Cayley-Dickson construction}.

	As mentioned in the introduction, the octonions are neither commutative nor even associative. However, they have the following crucial property called \emph{alternativity}:
	
	\begin{quotation}
		For any two octonions $x$ and $y$, the subalgebra of \oc\,  generated by $x$ and~$y$ is associative.
	\end{quotation}
	
	Here a subalgebra is always meant to contain the unit. By a nontrivial theorem of Emil Artin \cite{zorn, schafer}, the above condition is equivalent to requiring that the formulas 
	\begin{align}\label{eq:alt} x(yy)=(xy)y\quad \textup{and}\quad (xx)y=x(xy)\end{align} hold for any two elements $x,y\in\mathbb{O}$.

	We will now begin to construct the octonions. To do this, note that the well-known division algebras \R, \C\, and \ha\, are not only normed \R-algebras, but they come together with a \emph{conjugation}: an anti-involution $^{*}$ (i.e., a linear map $^*$ from the algebra to itself such that $(xy)^* = y^*x^*$ and $(x^*)^*=x$) with the property that $xx^*=x^*x=\left\Vert x \right\Vert^2$.
	
	This extra structure goes into the following construction, called ``doubling construction'' or ``Cayley-Dickson construction'': Let $(\mathbb{A}, \left\Vert\cdot\right\Vert, ^*)$ be a normed real algebra with conjugation. Then we define the structure of a normed real algebra with conjugation on the real vector space $\mathbb{A}^2$ by
	\begin{itemize}
		\item $(a,b) \cdot (c,d) = (ac-d^*b, da+bc^*)$, 
		\item $\left\Vert(a,b)\right\Vert = \sqrt{\left\Vert a\right\Vert^2+\left\Vert b\right\Vert^2}$,
		\item $(a,b)^* = (a^*,-b)$.
	\end{itemize}
	
	It can be checked that this turns $\mathbb{A}^2$ indeed into a real algebra with conjugation. Furthermore, it also conserves most of the properties as an algebra that $\mathbb{A}$ has had, though not all: 
	\begin{itemize}
		\item The real numbers are an associative and commutative division algebra and have the additional property that the conjugation is just the identity. Applying the Cayley-Dickson construction, we obtain the complex numbers which are still an associative and commutative division algebra, but the conjugation is not trivial any longer -- it is the usual complex conjugation. 
		\item Going from \C\, to \ha, we lose the property of commutativity: \ha\, is only an associative division algebra. 
		\item Applying the construction to \ha, we get the octonions \oc, which are not even associative any more, but are still a normed division algebra and are alternative as explained above.
		\item Continuing to apply the Cayley-Dickson construction, one obtains a $16$-dimensional real algebra called the \emph{sedenions}. The sedenions have zero divisors, and therefore cannot have a multiplicative norm. They are by far less important than the other four division algebras. However, they still satisfy a property called \emph{flexibility} which is a very weak form of associativity. Interestingly, if the Cayley-Dickson construction is applied again, this property is \emph{not} lost, so there are flexible $2^n$-dimensional normed real algebras for every $n$ \cite{moreno}.
	\end{itemize} 
	
	The proofs of these statements are rather lengthy, but straightforward. We will carry them out for the passage from quaternions to octonions, since this is the case we are most interested in and also the most difficult one, and leave the remaining cases to the reader.
	
	\begin{prop}
		\oc\, is an alternative normed division algebra.
	\end{prop}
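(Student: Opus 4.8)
The plan is to identify $\mathbb{O}$ with $\mathbb{H}^2$ equipped with the Cayley-Dickson data above and then to check, one by one, the defining properties of an alternative normed division algebra, each time reducing to the corresponding (simpler) fact for $\mathbb{H}$: associativity of $\mathbb{H}$, the anti-involution law for $^*$ on $\mathbb{H}$, and $aa^* = a^*a = \|a\|^2 1$. Two consequences of these for $\mathbb{H}$ will be used repeatedly: every $a$ satisfies $a + a^* \in \mathbb{R}\cdot 1$ (for instance because $a + a^* = \|a+1\|^2 - \|a\|^2 - 1$), and $\mathrm{Re}(ab) = \mathrm{Re}(ba)$, where $\mathrm{Re}(a) := \tfrac12(a + a^*)$.

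First I would dispose of the routine structural points. The product $(a,b)(c,d) = (ac - d^*b,\, da + bc^*)$ is manifestly $\mathbb{R}$-bilinear, $(1,0)$ is a two-sided unit, and $\mathbb{H}^2$ has real dimension $8$. The norm $\|(a,b)\| = \sqrt{\|a\|^2 + \|b\|^2}$ is just the Euclidean norm on $\mathbb{R}^8$, hence positive definite, satisfies the triangle inequality, and equals the absolute value on $\mathbb{R}\cdot 1$. A one-line computation with the anti-involution law on $\mathbb{H}$ gives $(a,b)^{**} = (a,b)$ and $\big((a,b)(c,d)\big)^* = (c,d)^*(a,b)^*$; another gives $(a,b)(a,b)^* = (a,b)^*(a,b) = (\|a\|^2 + \|b\|^2)(1,0) = \|(a,b)\|^2(1,0)$, which in particular shows $x + x^* \in \mathbb{R}\cdot 1$ in $\mathbb{O}$ as well.

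The real content is multiplicativity of the norm. Here I would expand
\[
\|(a,b)(c,d)\|^2 \;=\; \|ac - d^*b\|^2 + \|da + bc^*\|^2
\]
by writing each summand as $uu^*$ and multiplying out \emph{inside $\mathbb{H}$}, where associativity is available. One obtains $\|ac - d^*b\|^2 = \|a\|^2\|c\|^2 + \|b\|^2\|d\|^2 - 2\,\mathrm{Re}(acb^*d)$ and $\|da + bc^*\|^2 = \|a\|^2\|d\|^2 + \|b\|^2\|c\|^2 + 2\,\mathrm{Re}(dacb^*)$, and since $\mathrm{Re}(dacb^*) = \mathrm{Re}(acb^*d)$ the two cross terms cancel, leaving $(\|a\|^2 + \|b\|^2)(\|c\|^2 + \|d\|^2)$. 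I expect this to be the main obstacle, not because any single step is deep but because it is the one place where associativity of $\mathbb{H}$ is genuinely needed -- it is precisely the computation that breaks down when one doubles $\mathbb{O}$, which is why the sedenions acquire zero divisors. Once the norm is multiplicative, $xy = 0$ forces $\|x\| = 0$ or $\|y\| = 0$, so $\mathbb{O}$ has no zero divisors; as a finite-dimensional unital algebra without zero divisors it is automatically a division algebra, since left and right multiplication by a nonzero element are injective, hence surjective.

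It remains to prove alternativity, and for this it suffices to establish one of the two identities, say $(xx)y = x(xy)$: applying $^*$ to it and using $(pq)^* = q^*p^*$ turns it into $(y^*y^*)x^* = y^*(y^*x^*)$, which, since $^*$ is a bijection of $\mathbb{O}$, is exactly the other identity $x(yy) = (xy)y$; by the quoted theorem of Artin these two together are equivalent to alternativity. To check $(xx)y = x(xy)$ for $x = (a,b)$, $y = (c,d)$ I would first simplify $xx = (a^2 - \|b\|^2 1,\ (a + a^*)b)$ and then expand both sides with the multiplication formula; in each component the cross terms collapse via $ad^*b + a^*d^*b = (a + a^*)d^*b$ together with the centrality of $a + a^* \in \mathbb{R}\cdot 1$ in $\mathbb{H}$, so that $(xx)y$ and $x(xy)$ reduce to the same pair of elements of $\mathbb{H}$. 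This completes the verification that $\mathbb{O}$ is an alternative normed division algebra.
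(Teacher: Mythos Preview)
Your argument follows the same overall strategy as the paper: verify the two alternative identities directly in Cayley--Dickson coordinates, invoke Artin's theorem, prove multiplicativity of the norm by expanding in $\mathbb{H}$, and deduce the absence of zero divisors. Within that common frame you make two choices that differ from the paper and are arguably cleaner. First, for the norm you cancel the cross terms via the trace identity $\mathrm{Re}(uv)=\mathrm{Re}(vu)$ in $\mathbb{H}$, whereas the paper reduces the same equation to a case split on $d$ real versus $d$ purely imaginary; your route avoids that split entirely. Second, you observe that conjugation interchanges the two alternative laws, so only one needs to be checked by hand; the paper simply verifies $x(yy)=(xy)y$ (and tacitly leaves the companion identity to the reader). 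One small slip: applying $^*$ to $(xx)y=x(xy)$ actually yields $y^*(x^*x^*)=(y^*x^*)x^*$, not $(y^*y^*)x^*=y^*(y^*x^*)$ as you wrote; the formula you display is just the left alternative law again with relabelled variables. With the correct conjugated identity, the substitution $p=y^*$, $q=x^*$ gives $p(qq)=(pq)q$ and your reduction goes through as intended.
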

	
	\begin{proof}
		For the alternativity, we prove \eqref{eq:alt} and then use Artin's theorem to deduce alternativity. We write $x=(a,b)$ and $y=(c,d)$ with $a, b, c, d\in \mathbb{H}$ and write out both sides of the first equation, using the definition of the multiplication displayed above. Doing the calculation and obvious cancellations, this leaves us to show the two identities
		\[d^*bc+d^*da+d^*bc^* = add^* + c^*d^*b+cd^*b\]
		and 
		\[dac -dd^*b+dac^* = dca+dc^*a-bdd^*\,.\]
		Considering the first equation, note that $dd^*=d^*d=\left\Vert d \right\Vert^2$ is a real number and thus central, so that $d^*da=add^*$. The remaining terms can be regrouped as follows:
		\[d^*b(c+c^*)=(c+c^*)d^*b\,.\]
		However, $c+c^*$ is a real number as well, as can for instance be seen easily from the Cayley-Dickson construction, so that we have proved the first of the two identities. The second can be traced back similarly to the facts that $dd^*b=bdd^*$ and
		\[da(c+c^*)=d(c+c^*)a\,.\]
		
		We now prove that the octonion norm is multiplicative. This is also not completely formal, as can be seen from the fact that it is not true for the sedenions. With notation as above, the equation $\left\Vert(a,b)\right\Vert^2\left\Vert(c,d)\right\Vert^2=\left\Vert(a,b)(c,d)\right\Vert^2$ can be simplified to 
		\begin{align} \label{norm} acb^*d + d^*bc^*a^* = bacb^* + bc^*a^*d^*\,.\end{align}
		Following \cite[p.~48]{kantor}, we consider two cases: If $d$ is real, the equation holds true trivially. If $d$ is purely imaginary in the sense that $d^*=-d$, then the equation is equivalent to
		\[d(acb^*+bc^*a^*) = (acb^*+bc^*a^*)d\,,\]
		which is true since
		\[acb^* + bc^*a^* = acb^* + (acb^*)^*\]
		is real. By linearity, \eqref{norm} is true for all $d$.
		
		The multiplicativity of the norm at hand directly implies the fact that \oc\ is a division algebra: If $xy=0$, then 
		\[\left\Vert x\right\Vert \left\Vert y\right\Vert = \left\Vert xy\right\Vert=0\,,\]
		so $\left\Vert x\right\Vert = 0$ or $\left\Vert y\right\Vert=0$ and thus $x=0$ or $y=0$ since $\left\Vert \cdot\right\Vert$ is a norm.
	\end{proof} 
	
	\begin{remark}
		(i) Note that all formulas of the above proof are written without parantheses, thus we used  secretly that \ha\ is associative. This is essential: As mentioned above, the sedenions, constructed out of the non-associative octonions, are neither alternative, nor a division algebra (and, consequently, they cannot possess a multiplicative norm).

		(ii) The book \cite{conway-smith} gives a geometric argument that the octonions are alternative, which does not use Artin's theorem. See Section~6.8, in particular Theorem~2.
	\end{remark}
	
	\section{Construction of $\mathbb{O}\textup{P}^2$}
	
	This section discusses a construction of a projective space of dimension  $2$ over the octonions (which is very similar to the construction in \cite{baez}). Our goal is to get spaces with similar properties as their analogues over the real and complex numbers and the quaternions. The na\"{i}ve ansatz would be to define the $n$-dimensional octonionic projective space as a quotient of $\mathbb{O}^{n+1}\setminus \{0\}$, identifying every vector with its (octonionic) multiples. However, when one starts calculating, one sees that associativity is needed for this to be an equivalence relation. Since the octonions are not associative, we have to be more careful. In fact, the construction given in the following only works for $n\le 2$ (thanks to the property of alternativity), and we will see that there are theoretical obstructions to the existence of higher $\mathbb{O}\textup{P}^n$.
	
	One main difference to the other three projective planes is that we don't construct $\mathbb{O}\textup{P}^2$ as a quotient of $\mathbb{O}^3\setminus \{0\}$, but we restrict ourselves to the subset
	\begin{align*}
	T=\left\{(x,y,z)\in \mathbb{O}^3;\; \left\Vert x\right\Vert^2 + \left\Vert y\right\Vert^2 + \left\Vert z\right\Vert^2=1 \quad \textup{and}\right. & \textup{the subalgebra generated } \\ & \left. \textup{by $x, y$ and $z$ is associative}\vphantom{\left\Vert x\right\Vert^2} \right\}\,.
	\end{align*}
	
	We call two triples  $(x,y,z), (\tilde{x}, \tilde{y}, \tilde{z})\in T$ equivalent, $(x,y,z) \sim (\tilde{x}, \tilde{y}, \tilde{z})$, if and only if the six equations \[ xx^* = \tilde{x}\tilde{x}^*, \quad xy^* = \tilde{x}\tilde{y}^*, \quad xz^* = \tilde{x}\tilde{z}^*, \quad yy^* = \tilde{y}\tilde{y}^*, \quad yz^* = \tilde{y}\tilde{z}^*, \quad zz^* = \tilde{z}\tilde{z}^* \] hold. (The three remaining relations of a similar form follow from these by the properties of the conjugation.)  It is obvious that this is an equivalence relation, and thus we can set \[\mathbb{O}\mbox  {P}^2 = T/\sim \,.\]
	
	\subsection{Manifold structure} A first observation is that our space $\mathbb{O}\textup{P}^2$ constructed like this is quasi-compact since it is the quotient of a quasi-compact space. We will now show that it's a $16$-dimensional real manifold.
	
	The proof uses the following construction. Let $a, b, c$ be real numbers. Define an \R-linear map \[\ell=\ell_{(a,b,c)}\colon \mathbb{O}^3\longrightarrow \mathbb{O}\,, \quad \ell(x, y, z) = ax+by+cz\,.\] Note that for two triples $(x,y,z) \sim (\tilde{x}, \tilde{y}, \tilde{z})$ in $T$, we have $\ell(x,y,z)=0$ if and only if $\ell(\tilde{x}, \tilde{y},\tilde{z})=0$, since $\ell(x,y,z)$ vanishes exactly if $\ell(x,y,z) \ell(x,y,z)^*$ vanishes, and $\ell(x,y,z) \ell(x,y,z)^* = \ell(\tilde{x},\tilde{y},\tilde{z}) \ell(\tilde{x},\tilde{y},\tilde{z})^*$ by the definition of $\sim$ and since $a, b, c$ are real.
	
	Thus we get a well-defined open set \[U_\ell = U_{(a,b,c)} = \{[x,y,z];\, \ell(x,y,z) \neq 0\} \subset \mathbb{O}\textup{P}^2.\]
	
	\begin{prop}
		$\mathbb{O}\textup{P}^2$ is a locally Euclidean topological space.
	\end{prop}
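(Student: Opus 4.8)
The plan is to cover $\mathbb{O}\textup{P}^2$ by the three open sets $U_{(1,0,0)}$, $U_{(0,1,0)}$ and $U_{(0,0,1)}$ and to exhibit an explicit homeomorphism of each of them onto $\mathbb{O}^2\cong\mathbb{R}^{16}$. These three sets cover $\mathbb{O}\textup{P}^2$ because a triple in $T$ has at least one nonzero entry, and by the symmetry of the whole construction in the three coordinates it suffices to treat $U:=U_{(1,0,0)}=\{[x,y,z];\,x\neq 0\}$. I would define $\varphi\colon U\to\mathbb{O}^2$ by $\varphi[x,y,z]=(yx^*/\|x\|^2,\,zx^*/\|x\|^2)$; this is well defined, since the relations defining $\sim$ yield $\|x\|=\|\tilde x\|$, $yx^*=\tilde y\tilde x^*$ and $zx^*=\tilde z\tilde x^*$ whenever $(x,y,z)\sim(\tilde x,\tilde y,\tilde z)$. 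In the opposite direction I would set $\psi\colon\mathbb{O}^2\to U$, $\psi(u,v)=[\mu,\mu u,\mu v]$ with $\mu=(1+\|u\|^2+\|v\|^2)^{-1/2}$. The triple $(\mu,\mu u,\mu v)$ does lie in $T$: it has norm $1$ by the choice of $\mu$, and by Artin's theorem the subalgebra generated by the \emph{two} elements $u$ and $v$ (hence by $\mu$, $\mu u$, $\mu v$) is associative.

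The core of the argument is that $\varphi$ and $\psi$ are mutually inverse. One composite is immediate: $\varphi\psi(u,v)=(u,v)$, because $\mu$ is real and can be moved around and cancelled freely. For the other, start from $(x,y,z)\in T$ with $x\neq 0$, put $x^{-1}=x^*/\|x\|^2$ (this lies in the associative subalgebra generated by $x,y,z$, since $x+x^*$ is real), and set $u=yx^{-1}$, $v=zx^{-1}$. Multiplicativity of the norm gives $\|u\|^2=\|y\|^2/\|x\|^2$ and $\|v\|^2=\|z\|^2/\|x\|^2$, hence $\mu=\|x\|$; this takes care of the three ``diagonal'' relations defining $\sim$ between $(\mu,\mu u,\mu v)$ and $(x,y,z)$, and the ``off-diagonal'' ones reduce, after cancelling real scalars, to identities such as $(yx^{-1})(xz^*)=yz^*$. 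Here one invokes that $x$, $y$, $z$ lie in a common associative subalgebra, so that the left-hand side may be reassociated as $y(x^{-1}x)z^*=yz^*$. This reassociation is the one place where membership in $T$ is genuinely used; I expect the careful verification of the six relations to be the main obstacle, and it is precisely what breaks down for a general triple of octonions, which is why this construction only produces $\mathbb{O}\textup{P}^n$ for $n\le 2$.

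Finally, $\varphi$ and $\psi$ are continuous. Writing $q\colon T\to\mathbb{O}\textup{P}^2$ for the quotient map, $q$ restricts to a quotient map onto the open set $U$, and $\varphi\circ q$ on $q^{-1}(U)=\{(x,y,z)\in T;\,x\neq 0\}$ is the evidently continuous map $(x,y,z)\mapsto(yx^*/\|x\|^2,\,zx^*/\|x\|^2)$, so $\varphi$ is continuous; moreover $\psi=q\circ\sigma$ with $\sigma(u,v)=(\mu,\mu u,\mu v)$ continuous, so $\psi$ is continuous. Thus $\varphi$ is a homeomorphism $U\cong\mathbb{O}^2\cong\mathbb{R}^{16}$, and the same works for the other two charts, so $\mathbb{O}\textup{P}^2$ is locally Euclidean, of dimension $16$.
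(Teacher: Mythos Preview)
Your proof is correct and follows essentially the same strategy as the paper: cover $\mathbb{O}\textup{P}^2$ by the three coordinate charts $U_{(1,0,0)}$, $U_{(0,1,0)}$, $U_{(0,0,1)}$, write down explicit mutually inverse maps to $\mathbb{O}^2$, and invoke Artin's theorem to see that the image of the inverse lands in $T$. The only cosmetic difference is that the paper phrases the chart maps for a general $\ell_{(a,b,c)}$ (which it needs later for Hausdorffness) and leaves the verification that $\varphi_\ell$ and $\psi_\ell$ are inverse to the reader, whereas you specialise to the standard charts and spell out the key identity $(yx^{-1})(xz^*)=yz^*$ explicitly.
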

	
	\begin{proof}
		Suppose that $c\neq 0$. Consider the maps 
		\begin{align*}
		\varphi_\ell: U_\ell \rightarrow \mathbb{O}^2\,, \quad
		[x,y,z] \mapsto \left(\frac{x\ell^*}{\left\Vert \ell \right\Vert^2}, \frac{y\ell^*}{\left\Vert \ell \right\Vert^2}\right)\,,
		\end{align*}
		where $ \ell = \ell(x,y,z)$. This map is well-defined (the argument given above shows that $\left\Vert\ell\right\Vert^2=\ell\ell^*$ only depends on $[x,y,z]$ and not on $(x,y,z)$; the same argument works for $x\ell^*$ and $y\ell^*$) and thus continuous by the universal property of the quotient topology.
		An inverse map is given by
		\begin{align*}
		&\psi_\ell: \mathbb{O}^2 \rightarrow  U_\ell\,, \\
		(x,y) \mapsto \left[\frac{x}{r}, \frac{y}{r}, \frac{1-ax-by}{cr}\right], &\quad r = \sqrt{\left\Vert x\right\Vert^2 + \left\Vert y\right\Vert^2 +\frac{1}{c^2} \left\Vert 1-ax-by\right\Vert^2}\,. 
		\end{align*}
		Recall that in the definition of $\mathbb{O}\textup{P}^2$, we only consider triples whose entries lie in an associative subalgebra of $\mathbb{O}$. Thus this map is only well-defined since $\mathbb{O}$ is alternative, as defined in Section \ref{construction}. Here we use that for every octonion $y$, the conjugate $y^*$ lies in the subalgebra generated by $y$ since it only differs from $-y$ by a real number. This can directly be seen from the Cayley-Dickson construction.
		
		Note that equation \eqref{eq:alt} would not be sufficient at this point, but we use the nontrivial Artin theorem which states that \eqref{eq:alt} implies alternativity. Moreover, it is exactly at this point (and in the next paragraph) where our procedure breaks down if we want to construct higher $\mathbb{O}\textup{P}^n$ in the same way.

		Checking that $\varphi_\ell$ and $\psi_\ell$ are inverse to each other is a simple calculation. The reader may amuse herself by reproducing them. Note that we evade associativity of $\mathbb{O}$ by carrying out calculations only with elements lying in an associative subalgebra of $\mathbb{O}$ -- by definition of $T$ for the one and by the alternativity of $\mathbb{O}$ for the other direction.
		
		The same obviously works with $a$ or $b$ instead of $c$, thus for all triples $(a,b,c)\neq (0,0,0)$. Thus we have covered $\mathbb{O}\textup{P}^2$ by the three chart regions $U_{(1,0,0)}, U_{(0,1,0)}$ and $U_{(0,0,1)}$.
	\end{proof}
	
	\begin{remark}
		(i) It is easy to check that the coordinate changes are smooth, such that $\mathbb{O}\textup{P}^2$ in fact becomes a smooth manifold.
		
		(ii) The referee raised the question whether $\mathbb{O}\textup{P}^2$ admits the structure of a complex manifold. Since it is an $(n-1)$-connected $2n$-manifold for $n=8$, we may apply the criterion of \cite[Thm.~1]{yang}, which says that $\mathbb{O}\textup{P}^2$ doesn't even admit a stable almost complex structure. Note that $\mathbb{H}\textup{P}^2$ admits a stable almost complex structure, but no almost complex structure \cite[Thm.~1, 2]{yang}.
	\end{remark}
	
	\begin{lemma}
		$\mathbb{O}\textup{P}^2$ is Hausdorff.
	\end{lemma}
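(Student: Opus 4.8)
The plan is to construct a continuous injection of $\mathbb{O}\textup{P}^2$ into a Hausdorff space; this suffices, because if $f\colon X\to Y$ is continuous and injective and $Y$ is Hausdorff, then $X$ is Hausdorff (given $p\neq q$ in $X$, separate $f(p)\neq f(q)$ by disjoint open sets in $Y$ and pull them back). The map to use is the one recording exactly the data entering the definition of $\sim$, namely
\[
\Phi\colon \mathbb{O}\textup{P}^2\longrightarrow \mathbb{O}^6,\qquad [x,y,z]\longmapsto \bigl(xx^*,\;xy^*,\;xz^*,\;yy^*,\;yz^*,\;zz^*\bigr)
\]
(one may think of this as sending $[x,y,z]$ to the rank-one Hermitian ``matrix'' $vv^*$ with $v=(x,y,z)$).

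First I would observe that $\Phi$ is well defined and injective: by the very definition of the equivalence relation on $T$, two triples are equivalent \emph{precisely} when these six octonions agree, so the right-hand side is independent of the chosen representative, and conversely $\Phi([x,y,z])=\Phi([\tilde x,\tilde y,\tilde z])$ forces all six defining equations, i.e.\ $(x,y,z)\sim(\tilde x,\tilde y,\tilde z)$. For continuity, note that the composite $T\hookrightarrow\mathbb{O}^3\to\mathbb{O}^6$ defined by the same formula is continuous, since the conjugation is $\mathbb{R}$-linear and the multiplication is $\mathbb{R}$-bilinear on $\mathbb{O}$; as this composite is constant on the fibres of the quotient map $T\to\mathbb{O}\textup{P}^2$, the universal property of the quotient topology gives continuity of $\Phi$. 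Since $\mathbb{O}^6\cong\mathbb{R}^{48}$ is Hausdorff, the first paragraph finishes the proof. Together with the quasi-compactness already noted, $\Phi$ is then automatically a homeomorphism onto a compact subset of $\mathbb{R}^{48}$, so in fact $\mathbb{O}\textup{P}^2$ is a compact metrizable space.

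There is no serious obstacle here; the one point that deserves care is precisely the passage to the quotient — one must check that the six ``coordinate'' maps $[x,y,z]\mapsto xx^*,\, xy^*,\,\ldots,\,zz^*$ really do descend to well-defined \emph{continuous} functions on $\mathbb{O}\textup{P}^2$, which is exactly what excludes pathologies such as the line with two origins. Both facts are immediate from the definition of $\sim$ and the universal property of the quotient topology, so the argument is short.
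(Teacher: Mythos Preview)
Your argument is correct. Recording the six defining expressions gives a continuous map $T\to\mathbb{O}^6$ that is, by definition of $\sim$, constant on fibres and separates equivalence classes; the universal property of the quotient then yields a continuous injection $\mathbb{O}\textup{P}^2\hookrightarrow\mathbb{O}^6$, and Hausdorffness follows. The bonus observation about metrizability is also fine.

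This is, however, a genuinely different route from the paper's. The paper does not embed $\mathbb{O}\textup{P}^2$ anywhere; instead it reuses the chart domains $U_{(a,b,c)}$ already built and argues that any two points of $\mathbb{O}\textup{P}^2$ lie in a common $U_{(a,b,c)}$: for fixed $(x,y,z)\in T$ the real triples $(a,b,c)$ with $ax+by+cz=0$ form a proper linear subspace of $\mathbb{R}^3$, and two such subspaces cannot exhaust $\mathbb{R}^3$. Since each $U_{(a,b,c)}\cong\mathbb{O}^2$ is Hausdorff, distinct points are separated there. Your approach is more self-contained---it uses only the definition of $\sim$ and nothing about alternativity or the chart construction, and it throws in an embedding into Euclidean space for free. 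The paper's approach, on the other hand, exploits machinery already on the table and gives the pleasant geometric picture that the complements of the chart domains behave like hyperplanes in $\mathbb{R}^3$.
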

	
	\begin{proof} Let $(x,y,z)$ and $(x', y', z')$ two elements in $T$. We have to find $(a,b,c)\in \mathbb{R}^3$ such that $\ell_{(a,b,c)}(x,y,z) \neq 0$ and $\ell_{(a,b,c)}(x',y',z')\neq 0$, since then it follows that $[x,y,z]$ and $[x',y',z']$ both lie in the open subset $U_{(a,b,c)}$ which we already know  to be Hausdorff.
		
		To find $(a,b,c)$ as above, note that given $x$, $y$ and $z$, the set of all solutions to the equation $ax+by+cz=0$ is a subspace of $\mathbb{R}^3$ of dimension at most $2$, and the same is true for the equation $ax'+by'+cz'=0$. Since the union of two planes is never the whole $\mathbb{R}^3$, we find a point that fails to satisfy both of these equations. 
	\end{proof}
	
	\begin{corollary}
		$\mathbb{O}\textup{P}^2$ is a closed $16$-dimensional real manifold.\qed
	\end{corollary}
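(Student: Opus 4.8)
The plan is to assemble this corollary from what has already been proved: the second Proposition shows that $\mathbb{O}\textup{P}^2$ is locally Euclidean, with charts $\varphi_\ell$ identifying each $U_\ell$ with $\mathbb{O}^2\cong\mathbb{R}^{16}$; the preceding Lemma shows that it is Hausdorff; and it was observed at the start of this subsection that $\mathbb{O}\textup{P}^2$ is quasi-compact, being a quotient of $T$. So the only work left is to make sure $T$ itself is quasi-compact (so that $\mathbb{O}\textup{P}^2$ is too), and then to note that a quasi-compact Hausdorff locally Euclidean space is a closed manifold of the expected dimension.

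First I would check that $T$ is quasi-compact. Since $T$ lies in the unit sphere $S^{23}\subset\mathbb{O}^3\cong\mathbb{R}^{24}$, which is quasi-compact, it suffices to see that $T$ is closed in $\mathbb{O}^3$. The norm condition $\left\Vert x\right\Vert^2+\left\Vert y\right\Vert^2+\left\Vert z\right\Vert^2=1$ is plainly closed, so the real point is that $\{(x,y,z)\in\mathbb{O}^3 : \text{$x,y,z$ generate an associative subalgebra}\}$ is closed. The associator $[a,b,c]=(ab)c-a(bc)$ is an $\mathbb{R}$-trilinear, hence continuous, map $\mathbb{O}^3\to\mathbb{O}$, and the subalgebra generated by $x,y,z$ is spanned as a real vector space by $1$ together with the iterated products of $x,y,z$; by trilinearity this subalgebra is associative if and only if $[p,q,r]=0$ for every triple $p,q,r$ of such iterated products. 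Each of these is a polynomial equation in $(x,y,z)$, hence a closed condition, and an intersection of closed sets is closed, so $T$ is closed in $S^{23}$ and therefore quasi-compact. Consequently $\mathbb{O}\textup{P}^2=T/\!\sim$ is quasi-compact as a continuous image of $T$, and together with the Hausdorff property from the Lemma it is compact.

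It remains to package this. A compact Hausdorff locally Euclidean space is automatically second countable: by the previous proof $\mathbb{O}\textup{P}^2$ is covered by the three charts $U_{(1,0,0)}$, $U_{(0,1,0)}$, $U_{(0,0,1)}$, each homeomorphic to $\mathbb{R}^{16}$ and hence second countable, and a finite union of second countable spaces is second countable. Thus $\mathbb{O}\textup{P}^2$ is a topological manifold; its dimension is $16$ since the chart model space is $\mathbb{O}^2\cong\mathbb{R}^{16}$, and it has empty boundary because the charts are modeled on all of $\mathbb{R}^{16}$ rather than on a half-space. Being compact and without boundary, it is a closed manifold, which is the claim. (By Remark (i) the transition maps are smooth, so this closed manifold is in fact smooth, though that is not needed here.)

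I expect the only genuine obstacle to be the assertion that the associativity locus cutting out $T$ is closed. The clean way to see it is the trilinearity argument above, which reduces associativity of the generated subalgebra to the vanishing of countably many explicit associators of products of $x,y,z$; one could compress this further using the fact that in an alternative algebra three elements already generate an associative subalgebra as soon as their single associator $[x,y,z]$ vanishes, but that refinement is not logically necessary, since a countable intersection of closed sets is still closed. Everything else is routine point-set topology resting on the two preceding results.
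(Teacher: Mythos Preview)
Your argument is correct and matches the paper's approach: the paper gives no separate proof for this corollary, treating it as immediate from the Proposition (locally Euclidean with charts to $\mathbb{O}^2\cong\mathbb{R}^{16}$), the Lemma (Hausdorff), and the earlier remark that $\mathbb{O}\textup{P}^2$ is quasi-compact as a quotient of $T$. You go slightly further by verifying that $T$ is closed in $S^{23}$ (hence quasi-compact) and by checking second countability, points the paper simply takes for granted.
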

	
	\subsection{The projective line $\mathbb{O}\textup{P}^1$}

	Consider the closed subset of $\mathbb{O}\textup{P}^2$ given by all equivalence classes of the form $[x,y,0]$ with $x, y\in \mathbb{O}$. It is immediate from the definitions that this is homeomorphic to the space \[\{(x,y) \in \mathbb{O}^2; \left\Vert x\right\Vert^2 + \left\Vert y\right\Vert^2 =1\}/\sim\,,\] where $(x,y)\sim (\tilde{x}, \tilde{y})$ if and and only if the three relations \[xx^* = \tilde{x}\tilde{x}^*, xy^* = \tilde{x}\tilde{y}^*, yy^* = \tilde{y}\tilde{y}^*\]
	hold. This is of course the analogue of our construction of $\mathbb{O}\textup{P}^2$ in one dimension lower, so we call the resulting space $\mathbb{O}\textup{P}^1$. 
	
	By the same argumentation as in the previous paragraph, $\mathbb{O}\textup{P}^1$ is a closed manifold. Moreover, there is a homeomorphism $\mathbb{O}\textup{P}^1 \cong \mbox{S}^8$. This can be constructed in the very same way as in the familiar cases over \R, \C\, or \ha\, (since the map in one direction involves only one element of \oc\, at a time).
	
	\subsection{CW structure}
	
	The octonionic projective plane has a very simple cell structure with one cell in each of the dimensions $0$, $8$ and $16$. This cell structure is constructed in exactly the same way as the analogous structures for $\mathbb{R}\textup{P}^2$, $\mathbb{C}\textup{P}^2$ and $\mathbb{H}\textup{P}^2$.
	
	A little lemma that we will need in the proof of the following statements is the observation that we can choose a vector space isomorphism $\mathbb{O}\cong \mathbb{R}^8$ such that the norm $\left\Vert\cdot\right\Vert$ becomes the usual Euclidean norm on $\mathbb{R}^8$. This follows directly from the definition of \oc\ via the Cayley-Dickson construction, but it can also be deduced from the formal properties of the conjugation map: $\langle x,y\rangle = \frac{1}{2}(x^*y+y^*x)$ defines a symmetric, positive definite bilinear form on the vector space $\mathbb{O}$, so we can find an orthonormal basis by Sylvester's law of inertia. 
	
	To begin with, consider the canonical inclusion \[\mathbb{O}\textup{P}^1 \hookrightarrow  \mathbb{O}\textup{P}^2, \quad [x,y] \mapsto [x,y,0]\,.\]
	By our definition of $\mathbb{O}\textup{P}^1$, this map is a homeomorphism onto its image, which is closed in $\mathbb{O}\textup{P}^2$. Since  $\mathbb{O}\textup{P}^1 \cong \mbox{S}^8$, we can use  $\mathbb{O}\textup{P}^1$ as the $8$-skeleton in our cell decomposition. Now consider the map \[f:\,\mbox{S}^{15} \longrightarrow \mathbb{O}\textup{P}^1,\quad (x,y) \mapsto [x,y]\,,\]
	where we think of $\mbox{S}^{15}$ as a subset of $\mathbb{R}^{16} = \mathbb{R}^{8} \times \mathbb{R}^{8}$.

	\begin{lemma}
		We have $\mathbb{O}\textup{P}^2 =  \mathbb{O}\textup{P}^1 \cup_f \mbox{D}^{16}$.
	\end{lemma}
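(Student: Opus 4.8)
The plan is to write down an explicit characteristic map for the $16$-cell and then package everything with the standard compact-to-Hausdorff argument. Using the vector space isomorphism $\mathbb{O}\cong\mathbb{R}^8$ that identifies the octonion norm with the Euclidean norm, regard $\mbox{D}^{16}$ as the closed unit ball $\{(x,y)\in\mathbb{O}^2;\ \left\Vert x\right\Vert^2+\left\Vert y\right\Vert^2\le 1\}$ and define
\[
\Phi\colon \mbox{D}^{16}\longrightarrow\mathbb{O}\textup{P}^2,\qquad
\Phi(x,y)=\Bigl[\,x,\ y,\ \sqrt{1-\left\Vert x\right\Vert^2-\left\Vert y\right\Vert^2}\,\Bigr].
\]
The first thing I would check is that this is well defined, i.e. that the triple lies in $T$: the norms visibly sum to $1$, and since the third entry is a real number, the subalgebra of $\mathbb{O}$ generated by the three entries equals the one generated by $x$ and $y$, which is associative by alternativity (Artin's theorem). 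Continuity of $\Phi$ then follows from the universal property of the quotient topology, since $(x,y)\mapsto(x,y,\sqrt{1-\left\Vert x\right\Vert^2-\left\Vert y\right\Vert^2})$ is a continuous map $\mbox{D}^{16}\to T$. On the boundary $\mbox{S}^{15}$ the third coordinate vanishes, so $\Phi(x,y)=[x,y,0]$, which is exactly the image of $f(x,y)=[x,y]$ under the canonical inclusion $\mathbb{O}\textup{P}^1\hookrightarrow\mathbb{O}\textup{P}^2$. Hence $\Phi$ and this inclusion agree on $\mbox{S}^{15}$ and, by the universal property of the pushout, induce a continuous map $F\colon\mathbb{O}\textup{P}^1\cup_f\mbox{D}^{16}\longrightarrow\mathbb{O}\textup{P}^2$.

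Next I would identify the image of the open ball. If $\left\Vert x\right\Vert^2+\left\Vert y\right\Vert^2<1$ the third coordinate is a strictly positive real, so $\Phi(x,y)\in U_{(0,0,1)}$, and composing with the chart $\varphi_{(0,0,1)}$ from the proof that $\mathbb{O}\textup{P}^2$ is locally Euclidean yields the map $(x,y)\mapsto(x/t,y/t)$ with $t=\sqrt{1-\left\Vert x\right\Vert^2-\left\Vert y\right\Vert^2}$. This is a homeomorphism from the open unit ball onto $\mathbb{O}^2$, with inverse sending $(u,v)$ to $(u,v)\big/\sqrt{1+\left\Vert u\right\Vert^2+\left\Vert v\right\Vert^2}$. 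On the other hand, a class $[x,y,z]$ lies in $\mathbb{O}\textup{P}^1$ precisely when $z=0$ for one (equivalently every) representative, so $\mathbb{O}\textup{P}^2$ is the disjoint union of the closed subspace $\mathbb{O}\textup{P}^1$ and the open set $U_{(0,0,1)}$.

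Putting these together, $F$ is a bijection: it maps $\mathbb{O}\textup{P}^1$ identically onto $\mathbb{O}\textup{P}^1\subset\mathbb{O}\textup{P}^2$ and maps the interior of $\mbox{D}^{16}$ homeomorphically onto $U_{(0,0,1)}$, and the two images are disjoint and cover $\mathbb{O}\textup{P}^2$. Finally, $\mathbb{O}\textup{P}^1\cup_f\mbox{D}^{16}$ is quasi-compact, being a quotient of the disjoint union of the quasi-compact spaces $\mathbb{O}\textup{P}^1$ and $\mbox{D}^{16}$, whereas $\mathbb{O}\textup{P}^2$ is Hausdorff by the lemma above; a continuous bijection from a quasi-compact space onto a Hausdorff space is a homeomorphism, so $F$ is the desired homeomorphism $\mathbb{O}\textup{P}^2=\mathbb{O}\textup{P}^1\cup_f\mbox{D}^{16}$.

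The genuinely non-formal point, and the one I would expect to take the most care, is the very first one: that $\Phi$ actually takes values in $\mathbb{O}\textup{P}^2$, i.e. that the triples it produces satisfy the associativity condition built into $T$. This is precisely where alternativity (via Artin's theorem) is needed, exactly as in the construction of the charts. The remaining ingredients --- matching $\Phi|_{\mbox{S}^{15}}$ with $f$, recognizing $\mathbb{O}\textup{P}^2\setminus\mathbb{O}\textup{P}^1$ as the single chart domain $U_{(0,0,1)}$, and the compact-to-Hausdorff conclusion --- are routine bookkeeping.
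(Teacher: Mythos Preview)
Your proof is correct and follows exactly the paper's approach: the same characteristic map $(x,y)\mapsto[x,y,\sqrt{1-\left\Vert x\right\Vert^2-\left\Vert y\right\Vert^2}]$, the same induced map from the pushout, and the same compact-to-Hausdorff conclusion. The paper simply asserts that this map is well defined and bijective (``it is easily checked''), whereas you have carefully supplied those checks, including the key observation that the third coordinate is real so alternativity guarantees the triple lies in $T$.
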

	
	\begin{proof}
		A map from $\mbox{D}^{16}$ to $\mathbb{O}\textup{P}^2$ coinciding with $f$ on $\partial \mbox{D}^{16}$ is given by \[(x,y) \mapsto \left[x,y, \sqrt{1-\left\Vert x\right\Vert^2-\left\Vert y\right\Vert^2}\right]\,.\] It is easily checked that this map induces a bijective continuous map from $\mathbb{O}\textup{P}^1 \cup_f \mbox{D}^{16}$ to $\mathbb{O}\textup{P}^2$ which is thus a homeomorphism since $ \mathbb{O}\textup{P}^1 \cup_f \mbox{D}^{16}$ is quasi-compact and $\mathbb{O}\textup{P}^2$ is Hausdorff.
	\end{proof}
	
	\section{Cohomology of $\mathbb{O}\textup{P}^2$}
	
	After having constructed this very simple cell structure for $\mathbb{O}\textup{P}^2$, it is easy to compute the cohomology via the cellular cochain complex: 
	
	\begin{corollary}\label{cohomology}
		Let $A$ be any abelian group. Then \[\textup{H}^k(\mathbb{O}\textup{P}^2, A) \cong \begin{cases} A\,, \quad k=0, 8\; \textup{or}\; 16\,, \\ 0\,,\quad \textup{else.} \end{cases}\]
	\end{corollary}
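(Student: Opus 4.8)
The plan is to read off the cohomology directly from the cellular cochain complex attached to the CW structure established in the previous lemma. By that lemma, $\mathbb{O}\textup{P}^2$ is a CW complex with exactly one cell in each of the dimensions $0$, $8$ and $16$ and no cells in any other dimension. Hence the cellular chain complex $C_\bullet(\mathbb{O}\textup{P}^2)$ is the complex of free abelian groups with $C_0 = C_8 = C_{16} = \mathbb{Z}$ and $C_k = 0$ for all other $k$.

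First I would observe that every differential in this chain complex vanishes for trivial degree reasons: each boundary map $\partial_k\colon C_k \to C_{k-1}$ has either zero source or zero target, because no two of the integers $0$, $8$, $16$ differ by $1$. Consequently, for any abelian group $A$, the cellular cochain complex $C^\bullet(\mathbb{O}\textup{P}^2; A) = \mathrm{Hom}_{\mathbb{Z}}(C_\bullet(\mathbb{O}\textup{P}^2), A)$ also has vanishing differentials, and it is concentrated in degrees $0$, $8$ and $16$, where each term is $\mathrm{Hom}_{\mathbb{Z}}(\mathbb{Z}, A) \cong A$. Passing to cohomology -- which, for a complex with zero differentials, returns the complex itself -- yields $\mathrm{H}^k(\mathbb{O}\textup{P}^2; A) \cong A$ for $k \in \{0, 8, 16\}$ and $0$ otherwise, which is the assertion.

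There is essentially no obstacle at this stage: all the content sits in the geometric input, namely the explicit CW decomposition from the preceding lemma (which itself rests on the homeomorphism $\mathbb{O}\textup{P}^1 \cong \mathrm{S}^8$ and on alternativity), and once that decomposition is in hand the cohomology computation is purely formal. If one prefers to bypass cellular (co)homology altogether, the same conclusion follows from the long exact sequence of the pair $(\mathbb{O}\textup{P}^2, \mathbb{O}\textup{P}^1)$ together with $\mathbb{O}\textup{P}^1 \cong \mathrm{S}^8$ and the excision isomorphism $\mathrm{H}^k(\mathbb{O}\textup{P}^2, \mathbb{O}\textup{P}^1; A) \cong \widetilde{\mathrm{H}}^k(\mathrm{S}^{16}; A)$; the connecting homomorphisms again vanish for degree reasons, so the sequence splits into the stated isomorphisms.
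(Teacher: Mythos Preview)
Your argument is correct and follows exactly the approach indicated in the paper: compute the cellular cochain complex from the CW structure with one cell in each of dimensions $0$, $8$, $16$, observe that all differentials vanish for degree reasons, and read off the cohomology. The paper gives no more detail than the phrase ``via the cellular cochain complex'', so your write-up is in fact a fuller version of the same proof; the alternative long exact sequence argument you sketch at the end is a pleasant aside but not needed.
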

	
	The homology groups are computed in exactly the same way and with the same result. 
	
	Similarly, we can compute the homotopy groups of $\mathbb{O}\textup{P}^2$: By cellular approximation, $\pi_n(\mathbb{O}\textup{P}^2,*)=0$ for $n\le 7$ and $\pi_n(\mathbb{O}\textup{P}^2,*)=\pi_n(\mbox{S}^8)$ for $n\le 14$.
	
	The following question has been brought to the author's attention by Jens Reinhold.
	
	\begin{question}
		Is there a closed, $8$-connected manifold of positive dimension with odd Euler characteristic?
	\end{question}
	
	The octonionic projective plane gives such a manifold which is $7$-connected. An $8$-connected example would have to have dimension divisible by $32$, by a result of Hoekzema \cite[Thm.~1.2, Cor.~4.2]{hoekzema}.
	
	\subsection{Connection with the Hopf invariant $1$ problem} 
	
	The Hopf invariant is a classical invariant for maps $f\colon \mathrm{S}^{2n-1}\rightarrow \mathrm{S}^n$, with $n>1$. It goes back to work of Hopf in the 1930's. We quickly recall its definition from \cite{mosher-tangora}. Let us consider the mapping cylinder of such a map $f$. By inspection of the cellular cochain complex, as in Corollary \ref{cohomology} above, it has cohomology groups in degree $n$ and $2n$ which are cyclic with generators $\tau$ and $\sigma$. These are unique up to sign, depending on the orientation of the two spheres. The Hopf invariant $H(f)$ is defined by the formula \[\tau^2=H(f)\cdot \sigma\,.\] It is unique up to sign, which only depends on the orientation of $\mathrm{S}^{2n-1}$ since $\tau$ appears squared.
	
	The question in which dimensions there exists a map of Hopf invariant $1$ was a famous open problem in the early days of algebraic topology, until Adams proved in 1960 that this is only the case for $d\in \{1,2,4,8\}$. The sought maps for $d=2,4,8$ can be constructed as the attaching maps of the top dimensional cell in the projective planes over the complex numbers, quaternionics and octonionics, and we will now prove this for the octonionics, by analysing the ring structure on the cohomology of $\mathbb{O}\textup{P}^2$.
	
	\begin{thm}
		The attaching map $f\colon \mbox{S}^{15} \longrightarrow \mbox{S}^8$ of the $16$-cell in $\mathbb{O}\textup{P}^2$ has Hopf invariant $\pm 1$, the sign depending on the orientation of $\mbox{S}^{15}$.
	\end{thm}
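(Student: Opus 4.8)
The plan is to recognize $\mathbb{O}\textup{P}^2$ as the mapping cone of $f$ and then to read off the Hopf invariant from the cup product on $\textup{H}^*(\mathbb{O}\textup{P}^2;\mathbb{Z})$, which in turn is forced by Poincaré duality. By the preceding lemma, $\mathbb{O}\textup{P}^2 = \mathbb{O}\textup{P}^1 \cup_f \mbox{D}^{16}$ and $\mathbb{O}\textup{P}^1 \cong \mbox{S}^8$, so the mapping cone $\mbox{S}^8 \cup_f \mbox{D}^{16}$ — the space whose cohomology ring defines $H(f)$ — is homeomorphic to $\mathbb{O}\textup{P}^2$. By Corollary \ref{cohomology} the groups $\textup{H}^8(\mathbb{O}\textup{P}^2;\mathbb{Z})$ and $\textup{H}^{16}(\mathbb{O}\textup{P}^2;\mathbb{Z})$ are infinite cyclic; fix generators $\tau$ and $\sigma$. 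By the definition of the Hopf invariant, $\tau \cup \tau = H(f)\cdot \sigma$, so the theorem reduces to the single assertion that $\tau \cup \tau$ is a \emph{generator} of $\textup{H}^{16}(\mathbb{O}\textup{P}^2;\mathbb{Z})$.

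To prove this I would invoke Poincaré duality. We already know that $\mathbb{O}\textup{P}^2$ is a closed $16$-dimensional manifold, and it is simply connected (its homotopy groups vanish in degrees $\le 7$), hence orientable; fix a fundamental class $[\mathbb{O}\textup{P}^2] \in \textup{H}_{16}(\mathbb{O}\textup{P}^2;\mathbb{Z})$. Since all homology and cohomology groups of $\mathbb{O}\textup{P}^2$ are free, the cap product $\cap [\mathbb{O}\textup{P}^2]\colon \textup{H}^8 \to \textup{H}_8$ is an isomorphism and the Kronecker pairing $\textup{H}^8 \times \textup{H}_8 \to \mathbb{Z}$ is perfect (by the universal coefficient theorem, as $\textup{H}_7$ is free as well), so the composite $\textup{H}^8(\mathbb{O}\textup{P}^2;\mathbb{Z}) \times \textup{H}^8(\mathbb{O}\textup{P}^2;\mathbb{Z}) \to \mathbb{Z}$, $(\alpha,\beta) \mapsto \langle \alpha \cup \beta, [\mathbb{O}\textup{P}^2]\rangle$, is a unimodular bilinear form. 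A unimodular form on the rank-one free module $\mathbb{Z}\tau$ has Gram determinant $\pm 1$, i.e. $\langle \tau \cup \tau, [\mathbb{O}\textup{P}^2]\rangle = \pm 1$, which says exactly that $\tau \cup \tau = \pm\sigma$. Hence $H(f) = \pm 1$; reversing the orientation of $\mbox{S}^{15} = \partial\mbox{D}^{16}$ reverses the sign of the generator $\sigma$ coming from the top cell while leaving $\tau$ (pulled from the $8$-skeleton) available with either sign, and since $\tau$ occurs squared the sign of $H(f)$ depends only on the orientation of $\mbox{S}^{15}$, as claimed.

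The one genuine subtlety — and the reason the manifold structure is needed at all — is that the cellular cochain complex underlying Corollary \ref{cohomology} pins down only the additive structure of $\textup{H}^*(\mathbb{O}\textup{P}^2;\mathbb{Z})$, not the ring structure: a priori one could conclude merely that $\tau \cup \tau = m\sigma$ for some $m \in \mathbb{Z}$, and the whole content of the theorem is that $m = \pm 1$. Poincaré duality is precisely what excludes $m = 0$ (and $|m| > 1$); I expect this to be the crux, everything else being formal. As a more hands-on alternative one could instead exhibit two copies of $\mathbb{O}\textup{P}^1$ in $\mathbb{O}\textup{P}^2$, namely $\{[x,y,0]\}$ and $\{[x,0,z]\}$, each a smooth codimension-$8$ submanifold whose fundamental class is dual to $\pm\tau$, observe that they meet in the single point $[1,0,0]$, verify that the intersection there is transverse, and conclude $\tau\cup\tau = \pm\sigma$ from the interpretation of cup product as intersection product; in that approach the transversality bookkeeping would be the main thing to check, which is why I would favor the Poincaré-duality argument and at most remark on the geometric picture.
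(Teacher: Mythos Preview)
Your proposal is correct and follows essentially the same route as the paper: identify $\mathbb{O}\textup{P}^2$ with the mapping cone of $f$, then use Poincar\'e duality (via simple connectivity) together with the universal coefficient theorem to see that the cup-product pairing on $\textup{H}^8$ is unimodular, forcing $\tau\cup\tau=\pm\sigma$. The intersection-theoretic alternative you sketch in the last paragraph is a nice geometric complement that the paper does not mention.
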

	
	\begin{proof}
		Let $\tau$ and $\sigma$ be generators of $\mbox{H}^{8}(\mathbb{O}\textup{P}^2, \mathbb{Z})$ and $\mbox{H}^{16}(\mathbb{O}\textup{P}^2, \mathbb{Z})$, respectively. Since $\mathbb{O}\textup{P}^2$ is the mapping cylinder of $f$, we just have to show that $\tau^2 = \sigma$ (up to sign) by the definition above. 
		Since we have seen $\mathbb{O}\textup{P}^2$ to be a closed manifold which is orientable since it is simply-connected, we can profit of Poincar\'e duality to do so: Let  $\mu \in \mbox{H}_{16}(\mathbb{O}\textup{P}^2, \mathbb{Z})$ be a fundamental class. Using the universal coefficient theorem  as well as Poincar\'e duality, we get isomorphisms
		\[\mbox{H}^n(\mathbb{O}\textup{P}^2, \mathbb{Z}) \cong \mbox{Hom}(\mbox{H}_n(\mathbb{O}\textup{P}^2, \mathbb{Z}), \mathbb{Z})\cong  \mbox{Hom}(\mbox{H}^{16-n}(\mathbb{O}\textup{P}^2, \mathbb{Z}), \mathbb{Z}) \,,\]
		where the map from the left to the right maps $f$ to the linear map \[g\mapsto \langle f, \mu \cap g \rangle = \langle g\cup f, \mu\rangle\,. \]
		Now, $\mbox{Hom}(\mbox{H}^8(\mathbb{O}\textup{P}^2, \mathbb{Z}), \mathbb{Z})$ is isomorphic to $\mathbb{Z}$, generated by the two isomorphisms. Thus, $\tau$ has to be mapped to an isomorphism $\mbox{H}^8(\mathbb{O}\textup{P}^2, \mathbb{Z})\longrightarrow \mathbb{Z}$, which in turn has to map $\tau$ to a generator of $\mathbb{Z}$, so \[\langle \tau^2, \mu \rangle = \pm 1\,.\]
		Now, setting $\tau^2= k\sigma$ with $k\in\mathbb{Z}$, we get \[k\cdot \langle\sigma, \mu\rangle = \pm 1\,,\] so $k$ divides $1$, giving $k=\pm 1$ and thus $\tau^2=\sigma$.
	\end{proof}
	
	Note that the constructions we have carried out in the last two sections can be done in a much more general setting: Suppose that $\mathbb{A}$ is a normed real algebra $\mathbb{A}$ of dimension $d<\infty$ with conjugation, which is alternative and has no zero divisors. Then we can write down the same formulas as above to define a topological space $\mathbb{A}\textup{P}^2$, prove that it is a manifold and give it a cell structure with one cell in each of the dimensions $0$, $d$ and $2d$. The attaching map of the $2d$-cell will then be a map $\mbox{S}^{2d-1} \longrightarrow \mbox{S}^d$ of Hopf invariant $1$. 
	
	In \cite[Sec.~8.1, 8.2, 9.1]{numbers}, it is shown that the existence of the two extra structures on $\mathbb{A}$ does not have to be claimed on its own: For any real alternative division algebra, there is a canonical norm and conjugation\footnote{As the alert reader may have noticed, we have used exactly one more property of the conjugation, namely the fact that $z^*$ always lies in the subalgebra generated by $z$. However, the canonical conjugation constructed in \cite{numbers} always has this property.}. 
	
	Summarising, we have argued that the following holds: 
	
	\begin{thm}
		If there exists a real alternative division algebra of dimension $d$, then there is a map {$\mbox{S}^{2d-1} \longrightarrow \mbox{S}^d$} of Hopf invariant $1$.
	\end{thm}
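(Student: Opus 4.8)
The plan is to reduce the statement to the already-proved case of $\mathbb{O}\textup{P}^2$ by noting that the whole development of Sections~3 and~4 used only a handful of abstract properties of $\mathbb{O}$. So the first step is bookkeeping: I would isolate exactly which properties entered the constructions and proofs. These are (i) a positive-definite multiplicative norm $\left\Vert\cdot\right\Vert$ restricting to the usual absolute value on $\mathbb{R}\cdot 1$; (ii) a conjugation $^*$, i.e.\ an $\mathbb{R}$-linear anti-involution with $zz^*=z^*z=\left\Vert z\right\Vert^2$; (iii) alternativity, so that by Artin's theorem any two elements generate an associative subalgebra; (iv) the absence of zero divisors; and (v) the fact that, for every $z$, the conjugate $z^*$ differs from $-z$ by a real multiple of $1$, so in particular $z^*$ lies in the subalgebra generated by $z$. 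Here (iii) and (iv) are part of the hypothesis on $\mathbb{A}$, while (i), (ii) and (v) are supplied, for any real alternative division algebra, by the canonical norm and conjugation constructed in \cite[Sec.~8.1, 8.2, 9.1]{numbers}; property (v) is precisely the point addressed in the footnote above. It is essential that nothing else about $\mathbb{O}$ — and in particular nothing about the specific value $d=8$ — is ever used.

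Granting this, I would reproduce the constructions of Section~3 verbatim with $\mathbb{A}$ in place of $\mathbb{O}$: let $T\subset\mathbb{A}^3$ be the set of unit-norm triples whose entries generate an associative subalgebra, declare $(x,y,z)\sim(\tilde x,\tilde y,\tilde z)$ via the same six conjugate-product equations, and set $\mathbb{A}\textup{P}^2=T/\sim$. The proofs then transfer line by line. Well-definedness of the charts $\varphi_\ell$ and $\psi_\ell$ rests only on (i), (iii), (v) — the crucial point being that every product written down is formed inside an associative subalgebra, either by the definition of $T$ or, for $\psi_\ell$, by alternativity together with (v). The Hausdorff property is a dimension count in $\mathbb{R}^3$ needing only $\dim_{\mathbb{R}}\mathbb{A}<\infty$. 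Hence $\mathbb{A}\textup{P}^2$ is a closed $2d$-manifold. The same arguments give $\mathbb{A}\textup{P}^1\cong\mbox{S}^d$ (that homeomorphism touches one element of $\mathbb{A}$ at a time) and a CW structure with one cell in each of the dimensions $0$, $d$, $2d$, realising $\mathbb{A}\textup{P}^2=\mbox{S}^d\cup_f\mbox{D}^{2d}$ for an attaching map $f\colon\mbox{S}^{2d-1}\to\mbox{S}^d$; cellular cohomology then gives $\textup{H}^k(\mathbb{A}\textup{P}^2,\mathbb{Z})\cong\mathbb{Z}$ for $k=0,d,2d$ and $0$ otherwise.

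Finally I would run the Hopf-invariant argument of the previous theorem in this generality. For $d\ge 2$ the space $\mathbb{A}\textup{P}^2$ is simply connected, hence a closed orientable manifold of dimension $2d$, and it is the mapping cone of $f$, so its cohomology ring computes $H(f)$. With generators $\tau\in\textup{H}^d$ and $\sigma\in\textup{H}^{2d}$ and a fundamental class $\mu$, Poincar\'e duality and the universal coefficient theorem make $g\mapsto\langle g\cup\tau,\mu\rangle$ an isomorphism $\textup{H}^d(\mathbb{A}\textup{P}^2,\mathbb{Z})\to\mathbb{Z}$; hence $\langle\tau^2,\mu\rangle=\pm 1$, which forces $\tau^2=\pm\sigma$ and therefore $H(f)=\pm 1$. (For $d=1$ one only gets $\mathbb{A}=\mathbb{R}$ and $\mathbb{R}\textup{P}^2$, and the Hopf invariant of a self-map of $\mbox{S}^1$ is not defined, so this degenerate case is simply excluded.) The only input here that is not a matter of rereading is the citation to \cite{numbers}; accordingly I expect the one delicate point to be the check, in the middle paragraph, that associativity is never invoked in Section~3 except for triples that generate an associative subalgebra — which is exactly what (iii) and (v) guarantee in the general setting, and which is what allowed the octonionic proofs to work in the first place.
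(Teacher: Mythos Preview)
Your proposal is correct and follows essentially the same approach as the paper: the paper's ``proof'' is just the two paragraphs preceding the theorem, which observe that every construction and argument in Sections~3 and~4 used only the abstract properties you listed, and then invoke \cite{numbers} to supply the canonical norm and conjugation (including the footnoted property that $z^*$ lies in the subalgebra generated by $z$). Your write-up is more explicit about the bookkeeping and more careful about the degenerate case $d=1$, but the strategy is identical.
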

	
	By Adams' result, this is only the case for $d\in \{1,2,4,8\}$. Thus the construction of the projective plane shows that a real alternative division algebra can only exist in dimensions $1,2,4$ and $8$. Of course, this is still true if the alternativity claim is dropped, but one needs a different proof for this \cite[Sec.~2.3]{hatvec}. 
	
	\subsection{Non-existence of higher octonionic projective spaces}

	As pointed out above, our construction of the octonionic projective space $\mathbb{O}\textup{P}^2$ doesn't generalise to higher dimensional projective spaces since we have intensively used the fact that all calculations are done in associative subalgebras of $\mathbb{O}$. However, there is also a conceptual reason that there \emph{can't} be a space which deserves to be called $\mathbb{O}\textup{P}^3$ (or $\mathbb{O}\textup{P}^n$ for some $n\ge 3$) which we will now explain. 
	
	We will only claim two properties of our wannabe projective octonionic $3$-space: it should be a closed manifold, and it should have a cell structure with $\mathbb{O}\textup{P}^2$ as the $16$-skeleton and only one more $24$-cell. It then follows directly that the cohomology $\mbox{H}^*(\mathbb{O}\textup{P}^3, \mathbb{Z})$ is $\mathbb{Z}$ in dimensions $0, 8, 16$ and $24$ and trivial otherwise.
	
	By a similar argument as for $\mathbb{O}\textup{P}^2$, we also get the ring structure on the cohomology: \[\mbox{H}^*(\mathbb{O}\textup{P}^3, \mathbb{Z}) \cong \mathbb{Z}[x]/(x^4), \quad |x| = 8.\]
	To see this, note that the inclusion of the $16$-skeleton induces an isomorphism on cohomology in degrees smaller than $23$ which respects the multiplicative structure, thus it is sufficient to show that a fourth power of the generator of $\mbox {H}^8$ generates $\mbox{H}^{24}$. But this is done in a very similar way as for the octonionic projective plane, using Poincar\'e duality. 
	
	However, using Steenrod powers modulo $2$ and $3$, one can show that a space with cohomology $\mathbb{Z}[x]/(x^m)$, $m>3$, can only exist if $x$ has degree $2$ or $4$ \cite[Sec.~4.L]{hat}. 
	
	\subsection{The story continues}
	
	We have used the octonions to construct a map between spheres of Hopf invariant $1$. There are other phenomena in the intersection of algebra, topology and geometry that show deep relations with the octonions. Examples include exotic spheres, Bott periodicity and exceptional Lie groups (these can be used to see that $\mathbb{O}\textup{P}^2$ is a homogeneous space, for instance). The article \cite{baez} explains these and many more interesting examples.

	\bibliographystyle{alpha} 
	\bibliography{octonions} 
	
\end{document}